\documentclass[11pt]{article}
\usepackage{srcltx}

\textwidth 15 true cm
\textheight 21.5 true cm
\voffset -0.6 true cm
\hoffset -1.2 true cm
\usepackage{amsmath,amsfonts,latexsym}
\usepackage{latexsym,amsfonts,euscript,amssymb}
\numberwithin{equation}{section}

\newtheorem{theorem}{Theorem}[section]

\newtheorem{lemma}[theorem]{Lemma}

\newcommand{\cy}[1]{\mbox{$\langle #1 \rangle $}}

\newenvironment{proof}{\underline{Proof:}}{ \hfill $\Box$ 
                      \vspace{\baselineskip}}

\begin{document}

\pagestyle{myheadings}

\title{On the number of cyclic subgroups in finite groups}

\author{Wei Zhou}

\date{}
\maketitle

\begin{center}
	School of Mathematics and Statistics,\\
	Southwest University, Chongqing 400715, P. R. CHINA\\
	zh\_great@swu.edu.cn
	\end{center}

\footnotetext{Support by
	National Natural Science Foundation of China (Grant No. 11471266). \\
	{\em AMS Subject Classification}: 20D15, 20D25
	\newline
	{\em Key words and phrases}:
	finite groups, cyclic subgroups, $2$-groups}

\renewcommand{\arraystretch}{0.1}

\begin{abstract}
	We study the number of cylic subgroups in finite groups and get that $G$ has $|G|-3$ cyclic subgroups if and only if $G \cong D_{10}$ or $Q_8$.
\end{abstract}

\baselineskip .65 true cm

\section{Introduction}
Let $G$ be a finite group  and $C(G)$ be the poset of cyclic subgroups of $G$. Sometimes $C(G)$ can decide the structure of $G$. For example, $G$ is an elementary abelian $2$-group if and only if $|C(G)|=|G|$. T\v{a}rn\v{a}uceanu \cite{Tar1, Tar2} classified the groups $G$ such that $|G|-|C(G)|=1$ or $ 2$.

In this note, we shall continue this study by describing the finite groups $G$ such that 
$$|C(G)|=|G|-3.$$
We prove that there are just two such groups: $D_{10}$ and $Q_8$.

For any finite group $G$, denote by $\pi_e(G)$ the set of all element orders of $G$, and donote by $\pi(G)$ the set of all prime divisors of $|G|$. For convenience, let $\pi_c(G)=\pi_e(G)-\pi(G)-\{1\}$. For any $i \in \pi_e(G)$, denote by $C_i(G)$ the set of all cylic subgroups of order $i$ in $G$, and denote $c_i(G)=|C_i(G)|$ ($c_i$ for short).  

\section{The main result}

\begin{lemma} \label{3p}
	Let $|G|=p_1^{a_1} \cdots p_r^{a_r}$ with $p_1< \cdots <p_r$. If $r\ge 3$, then $|G|-c(G)>p_r$.
\end{lemma}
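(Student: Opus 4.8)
The plan is to work with the exact identity $|G|-c(G)=\sum_{m}c_m(\varphi(m)-1)$, where the sum runs over $1<m\in\pi_e(G)$ and $\varphi$ is Euler's function. This follows by counting $G$ according to element orders: a cyclic subgroup of order $m$ has exactly $\varphi(m)$ generators, so $|G|=1+\sum_{m>1}c_m\varphi(m)$, while $c(G)=1+\sum_{m>1}c_m$. Since $\varphi(p)-1=p-2$ for a prime $p$ and every term is nonnegative, this yields the crude bound $|G|-c(G)\ge\sum_{p\in\pi(G)}c_p(p-2)$. I will also use the classical congruence $c_p\equiv 1\pmod{p}$ for each $p\in\pi(G)$ (so that $c_p=1$ or $c_p\ge p+1$), together with the elementary remark that if $c_p=1$ then the unique subgroup of order $p$ is normal in $G$, since any conjugate of it is again a subgroup of order $p$.

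I would then split into three cases. \textbf{(a)} If $p_1\ge 3$, then $p_1-2\ge 1$ and $p_2-2\ge 3$, so $\sum_{p\in\pi(G)}c_p(p-2)\ge(p_1-2)+(p_2-2)+(p_r-2)\ge 1+3+(p_r-2)=p_r+2>p_r$. \textbf{(b)} If $p_1=2$ but $r\ge 4$, the $p=2$ term vanishes and there remain, among others, the terms for $p_2\ge 3$, $p_3\ge 5$, and $p_r$, giving $\ge(p_2-2)+(p_3-2)+(p_r-2)\ge 1+3+(p_r-2)=p_r+2>p_r$. So the real work is \textbf{(c)}: $p_1=2$ and $r=3$; write $q=p_2$ and $p=p_3$, so $3\le q<p$ and $p\ge 5$.

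In case (c) the $p=2$ term contributes nothing and the bound from $c_q$ and $c_p$ alone can be as small as $1+(p-2)=p-1$, just short of the target, so one must exhibit extra cyclic subgroups. If $c_p\ge p+1$ then $c_p(p-2)\ge(p+1)(p-2)>p$ and we are done; so assume $c_p=1$ and let $Z\trianglelefteq G$ be the unique subgroup of order $p$. If $c_q\ge q+1$ then $c_q(q-2)+c_p(p-2)\ge(q+1)(q-2)+(p-2)\ge 4+(p-2)=p+2>p$. Finally, if $c_q=1$ as well, let $T\trianglelefteq G$ be the unique subgroup of order $q$; then $TZ$ is a subgroup, $T\cap Z=1$, and $T,Z$ are both normal, so $TZ\cong T\times Z$ is cyclic of order $qp$. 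Hence $c_{qp}\ge 1$, and $|G|-c(G)\ge(q-2)+(p-2)+(\varphi(qp)-1)=(q-2)+(p-2)+(qp-q-p)=qp-4>p$ since $p(q-1)\ge 10$.

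I expect the one genuine obstacle to be case (c) with $q=3$: the subgroups of prime order then account for only $p-1$, falling short by $2$, so the proof must use the structural fact that $c_3=1=c_p$ forces a normal cyclic subgroup of order $3p$ whose $\varphi(3p)-1$ generators close the gap. All remaining steps are just arithmetic with the identity above and the congruence $c_p\equiv 1\pmod{p}$; the only external input is that congruence (due to Frobenius), which is entirely standard.
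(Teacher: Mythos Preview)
Your proof is correct. Both you and the paper start from the identity $|G|-c(G)=\sum_m c_m(\varphi(m)-1)$ and reduce to the critical case $\pi(G)=\{2,q,p\}$ with $q<p$, where the bare prime contributions fall short by at most $2$ and one must locate a cyclic subgroup of order $qp$ to close the gap. The difference lies in how that subgroup is produced. You invoke Frobenius' congruence $c_\ell\equiv 1\pmod \ell$ to force $c_q,c_p\in\{1\}\cup[\ell+1,\infty)$, so that either the prime contributions alone already exceed $p$ or both prime-order subgroups are normal and their product is cyclic of order $qp$. The paper does not use Frobenius; it argues by contradiction, first squeezes the inequalities down to $q=3$ with $c_3\le 2$, and then handles the subcase $c_3=2$ (which your congruence shows is actually vacuous) by letting $G$ act on the two subgroups of order $3$ and observing that a Sylow $p$-subgroup must normalize one of them, again yielding an element of order $3p$. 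Your route is shorter and more uniform, never needing the intermediate reduction to $q=3$; the paper's route is marginally more elementary in that it avoids Frobenius and relies only on Cauchy, Sylow-type counting, and an explicit action argument.
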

\begin{proof}
For any finite group $G$, we know that
	\begin{equation*}
	\begin{split}
	&|G|=\sum_{k\in \pi_e(G)} c_k(\phi (k)), \\
	&|C(G)|=\sum_{k \in \pi_e(G)} c_k,
	\end{split}
	\end{equation*}
where $\phi$ is the Eucler function.
Hence 
\begin{equation}
|G|-|C(G)|=\sum_{k \in \pi_e(G)} c_k(\phi(k)-1).
\label{eq:1}
\end{equation}

	Let $G$ be a group such that $|G|-|C(G)|\le p_r$. 
By (\ref{eq:1}), we see that 
$$\sum_{k \in \pi_e(G)} c_k(\phi(k)-1)\le p_r.$$
By Cauch theorem, $c_{p_i} \ge 1$ for all $i \le r$. Hence we get that 
$$c_{p_1}(p_1-2)+c_{p_2}(p_2-2)+\cdots+c_{p_r}(p_r-2)+\sum_{s \in \pi_c(G)} c_s(\phi(s)-1)\le p_r.$$
Since $r \ge 3$, we get that $p_r \ge 5$. Thus $c_{p_r}=1$ and
$$c_{p_1}(p_1-2)+c_{p_2}(p_2-2)+\cdots+c_{p_{r-1}}(p_{r-1}-2)+\sum_{s \in \pi_c(G)} c_s(\phi(s)-1)\le 2.$$
So we get $r=3$, $p_2=3$ and $p_1=2$.
It follows that 
$$c_{3}+\sum_{s \in \pi_c(G)} c_s(\phi(s)-1)\le 2.$$

If $c_3=2$, then $\pi_c(G)=\emptyset$. Let $X_1, X_2$ be the two cylic subgroups of order $3$. Considering the action of $G$ on $\{X_1, X_2\}$, we see that $X_1$ is normalized by a Sylow $p_r$-subgroup, which implies  $3p_r\in \phi_c(G)$, a contradiction. Hence $c_3=1$. Similarly, we get that $3p_r \in \pi_c(G)$. But $c_{3p_r}(\phi(3p_r)-1)\ge 2(p_r-1)-1\ge 2(5-1)-1=7$, a contradiction.
\end{proof}

\begin{lemma} \label{2p}
	Let $|G|=p^aq^b$, where $p, q$ are primes such that $p<q$. Then $|G|-c(G)>q$ if $G \not \cong D_{2q}$, $C_6$, $D_{12}$, $C_6$ or $S_3$, and $|D_{2q}-|C(D_{2q})|=q-2$, $|C_6|-|C(C_6)|=2$, $|D_{12}|-|C(D_{12})|=2$. $|C_6|-|C(C_6)|=2$ and $|S_3|-|C(S_3)|=1$.
\end{lemma}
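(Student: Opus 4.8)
The plan is to argue from the identity (\ref{eq:1}) exactly as in Lemma~\ref{3p}. Suppose $|G|-|C(G)|\le q$. By Cauchy's theorem $c_p\ge 1$ and $c_q\ge 1$, so
$$c_p(p-2)+c_q(q-2)+\sum_{s\in\pi_c(G)}c_s(\phi(s)-1)\le q.$$
First I would dispose of the case $p$ odd. Then $p-2\ge 1$, and $q>p\ge 3$ forces $q\ge 5$, so $q-2\ge 3$; hence $(p-2)+(q-2)\le q$ gives $p=3$. If $c_q\ge 2$ then $c_q(q-2)\ge 2(q-2)>q$, so $c_q=1$, leaving $c_3+\sum_{s\in\pi_c(G)}c_s(\phi(s)-1)\le 2$. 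Since $\phi(9)-1=5$, $\phi(q^2)-1=q^2-q-1$ and $\phi(3q)-1=2q-3$ all exceed $2$, $G$ has no element of order $9$, $q^2$ or $3q$. Using that a $p$-group with a unique subgroup of order $p$ is cyclic or generalized quaternion (and that the number of subgroups of order $p$ in a $p$-group is $\equiv 1\pmod{p}$), the absence of elements of order $9$ and $q^2$ forces $|G|=3q$; but then $G$ cyclic would have an element of order $3q$, and $G$ nonabelian has $c_3=q\ge 5>2$. So no group with $p$ odd satisfies $|G|-|C(G)|\le q$.

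For $p=2$ the first term vanishes, and I would split on $q\ge 5$ versus $q=3$. If $q\ge 5$, then $c_q(q-2)\le q<2(q-2)$ gives $c_q=1$, hence $\sum_{s\in\pi_c(G)}c_s(\phi(s)-1)\le 2$; as $\phi(q^2)-1$, $\phi(2q)-1=q-2$ and $\phi(8)-1=3$ all exceed $2$, $G$ has no element of order $q^2$, $2q$ or $8$. Then the Sylow $q$-subgroup has a unique subgroup of order $q$, hence is cyclic, so $b=1$ and it is normal; moreover $C_G(C_q)=C_q$ (otherwise $G$ has an element of order $2q$), so by Schur--Zassenhaus the Sylow $2$-subgroup embeds in $\mathrm{Aut}(C_q)\cong C_{q-1}$ and is cyclic of order at most $4$. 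Order $2$ gives $G\cong D_{2q}$; order $4$ forces the number of Sylow $2$-subgroups to be $q$ (otherwise $G\cong C_{4q}$, which has an element of order $2q$), whence $c_4=q\ge 5>2$, a contradiction. So here $G\cong D_{2q}$.

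If $q=3$, then $c_3+\sum_{s\in\pi_c(G)}c_s(\phi(s)-1)\le 3$. Since $\phi(9)-1=5>3$, $G$ has no element of order $9$, so $b=1$; and as the number $n_3$ of Sylow $3$-subgroups equals $c_3$ and satisfies $n_3\equiv 1\pmod{3}$ with $c_3\le 3$, we get $c_3=1$. Thus the Sylow $3$-subgroup is normal, $G=C_3\rtimes P$ with $P$ a $2$-group, $C_G(C_3)=C_3\times Z$ where $Z=C_P(C_3)$ has index at most $2$ in $P$, and $\sum_{s\in\pi_c(G)}c_s(\phi(s)-1)\le 2$. Now $\phi(12)-1=3>2$ forces $Z$ to have exponent at most $2$, hence elementary abelian; every element of order $6$ lies in $C_G(C_3)$, so $c_6$ equals the number of subgroups of order $2$ of $Z$, whence $c_6\le 2$ gives $|Z|\le 2$; and $\phi(8)-1=3>2$ forces $P$ to have exponent at most $4$, so $|P|\le 4$. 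Running through $|Z|=1$ (so $P\cong C_2$, $G\cong S_3$), $|Z|=2$ with $|P|=2$ (so $G\cong C_6$), and $|Z|=2$ with $|P|=4$ (where $P\cong C_4$ yields three Sylow $2$-subgroups and $c_4=3>2$, while $P\cong C_2\times C_2$ yields $G\cong D_{12}$) leaves exactly $S_3$, $C_6$, $D_{12}$. Finally I would record the four values by counting cyclic subgroups directly: $D_{2q}$ has $q+2$ of them, so $|D_{2q}|-|C(D_{2q})|=q-2$; $C_6$ has $4$, giving $2$; $D_{12}\cong S_3\times C_2$ has $10$, giving $2$; and $S_3$ has $5$, giving $1$.

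The step I expect to be the main obstacle is the $p=2$ analysis of the Sylow $2$-subgroup: passing from the numerical inequality to ``the action is faithful, hence $P$ is cyclic'' when $q\ge 5$, and, when $q=3$, dealing with the order-$4$ case by separating $P\cong C_4$ (excluded because $c_4=3$) from $P\cong C_2\times C_2$ (which must be recognized as $D_{12}$). Checking that no non-prime element order outside those considered can contribute to $\pi_c(G)$, and the four final counts of cyclic subgroups, are routine but need care.
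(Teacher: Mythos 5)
Your argument is correct and follows essentially the same route as the paper: the identity $|G|-|C(G)|=\sum_{k}c_k(\phi(k)-1)$, the case split $q\ge 5$ versus $q=3$ with $p=2$, and elimination of everything else by bounding the counts $c_s$ for the relevant orders $s$ (your treatment of odd $p$ via the ``unique subgroup of order $p$'' theorem and of the order-$4$ Sylow $2$-subgroup via Sylow counting is, if anything, more explicit than the paper's). One inference needs repair as literally written: ``$G$ has no element of order $9$, so $b=1$'' does not follow on its own (e.g.\ $C_3\times C_3$ has exponent $3$) --- you must combine it with $c_3\le 3$ and the fact, which you already invoke earlier, that the number of subgroups of order $3$ in a $3$-group is $\equiv 1\pmod 3$, so the Sylow $3$-subgroup has a unique subgroup of order $3$, is cyclic, and hence has order $3$; likewise the bound $|P|\le 4$ really comes from $|Z|\le 2$ together with $[P:Z]\le 2$, not from the exponent condition.
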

\begin{proof}
	Let $G$ be the group such that $|G|-c(G)\le q$. By (\ref{eq:1}), 
	\begin{equation}
	\label{eq:2}
c_p(p-2)+c_q(q-2)+\sum_{s \in \pi_c(G)} c_s(\phi(s)-1)\le q.
\end{equation}

	i) $q \ge 5$.
	Then $c_q=1$ and $$c_p(p-2)+\sum_{s \in \pi_c(G)} c_s(\phi(s)-1) \le 2.$$
	If 	$p\ne 2$, then $\pi_c(G)=\emptyset$, and $c_p(p-2) \le 2$.  It follows that $p=3$ and $c_3 \le 2$.  But we can find an element of order $3q$, a contradiction. So we get that $p=2$. We see that $\pi_e(G)=\{1,2, q\}$ or $\{1, 2, 2^2, q\}$. Thus $G$ has only one Sylow $q$-subgroup $Q$ isomorphic to $C_q$.

	If $\pi_e(G)=\{1,2, 4, q\}$, then $c_4 \le 2$. Thus $Q$ normalizes a cyclic subgroup of order $4$. This implies that $4q \in \pi_c(G)$, a contradiction. Hence $\pi_e(G)=\{1,2,q\}$. If $a\ge 2$, by considering the conjugate action of a Sylow $2$-group on $Q$, we can find an elmement of order $2q$, a contradiction. Hence $|G|=2q$, and $G=\cy{u, v| u^q=1, v^2=1, u^v=u^{-1}}\cong D_{2q}$.
	In this case, $|G|-|C(G)|=q-2$.

	ii) $q=3$ and $p=2$. Now (\ref{eq:2}) becomes that 
	$$c_3+\sum_{s\in \pi_c(G)} c_s(\phi(s)-1) \le 3.$$
	It follows that $c_3 \le 3$ and $\pi_e(G)\subseteq \{1, 2, 3, 4, 6\}$.
	
	If $\pi_e(G)=\{1,2,3,4, 6\}$, then $c_3=c_4=c_6=1$.
	And we get $12\in \pi_e(G)$, a contradiction. If $\pi_e(G)=\{1,2,3,4\}$, then $c_4 \le 2$. We can get that $12 \in \pi_e(G)$ if $c_4=1$. Hence $c_4=2$. Therefore, a Sylow $3$-group will normalizes a cyclic subgroup of order $4$, and we get that $12 \in \pi_e(G)$, a contradiction.

	If $\pi_e(G)=\{1,2,3, 6\}$, then $c_3+c_6 \le 3$. Then $c_3=1$ or $c_6=1$. Thus we can get a normal cyclic subgroup $X=\cy{x}$ of order $3$. Thus $|G:C_G(x)|\le 2$. 
	If $a\ge 3$, then $C_G(x)$ will contain a subgroup $L\cong C_2 \times C_2 \times C_3$. Since $c_6(L)=3$, we get a contradiction. Henc $a \le 2$. From $c_3 \le 2$, we get that $b=1$. Hence $|G| \le 12$, and $G \cong C_6$ or $D_{12}$.

	Now we need to consider the case that $\pi_e(G)=\{1,2,3\}$. Thus $c_3 \le 3$, and there are at most $6$ nontrivial $3$-element. It follows that a Sylow $3$-subgroup is isomorphic to $C_3$. By Sylow theorem, $c_3=1$. Thus the Sylow $3$-subgroup $Q$ is normal in $G$. Since $6 \not \in \pi_e(G)$, $a \le 1$, and $|G|=6$. Since $|C_6|-|C(C_6)|=2$ and $|S_3|-|C(S_3)|=1$, we get that $G \cong C_6$ or $S_3$ in this case.
	\end{proof}

	\begin{lemma} \label{1p}
		Let $|G|=2^a$. If $|G|-|C(G)|=2^{a}-3$, then $G \cong Q_8$.
	\end{lemma}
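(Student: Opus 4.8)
The plan is to follow the pattern of the previous two lemmas: first use the identity (\ref{eq:1}) to reduce to a very rigid situation, and then finish by induction on $|G|$. Write $|G|=2^{a}$. Since $G$ is a $2$-group and $\phi(2^{i})-1=2^{i-1}-1$, (\ref{eq:1}) reads $2^{a}-|C(G)|=c_{4}+3c_{8}+7c_{16}+\cdots$. If $G$ has an element of order $\ge 8$, then $c_{8}\ge 1$ and also $c_{4}\ge 1$ (a cyclic group of order $8$ contains one of order $4$), so the right-hand side is at least $4$; hence $2^{a}-|C(G)|=2^{a}-3$ forces $\exp(G)=4$ and $c_{4}=3$. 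So $G$ has exponent $4$ and exactly $6$ elements of order $4$, generating three cyclic subgroups $A_{1},A_{2},A_{3}$. Moreover $G$ is non-abelian, since an abelian $2$-group $C_{4}^{\,s}\times C_{2}^{\,t}$ of exponent $4$ has $2^{s+t}(2^{s}-1)$ elements of order $4$, which is never $6$.

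Next I would record two structural reductions. (i) $G$ has no elementary abelian maximal subgroup: if $M\cong C_{2}^{\,a-1}$ had index $2$ and $t\in G\setminus M$ induced on $M$ the involution $\varphi$, then the coset $Mt$ would contain $|M|-|C_{M}(t)|$ elements of order $4$, so $|M|-|C_{M}(t)|=6$, forcing $|M|=8$ and an involution $\varphi\in GL_{3}(2)$ with $1$-dimensional fixed space; but then $(1+\varphi)^{2}=0$ gives $2=\operatorname{rank}(1+\varphi)\le\dim\ker(1+\varphi)=1$, a contradiction. Hence every maximal subgroup of $G$ has exponent $4$. (ii) $Z(G)$ is elementary abelian, and every involution of $Z(G)$ lies in $\{a_{1}^{\,2},a_{2}^{\,2},a_{3}^{\,2}\}$: if $z\in Z(G)$ had order $4$, then for each involution $x\neq z^{2}$ the element $xz$ would have order $4$ and not lie in $\langle z\rangle$, giving $2^{a}-8\le 6$ and (since a $2$-group of order $\le 8$ with a central cyclic subgroup of order $4$ is abelian) contradicting non-abelianness; and if $z\in Z(G)$ were an involution outside $\{a_{1}^{\,2},a_{2}^{\,2},a_{3}^{\,2}\}$, then $x\mapsto xz$ would be a fixed-point-free involution of $\{A_{1},A_{2},A_{3}\}$, which is impossible.

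Now I would argue by induction on $|G|$, assuming $|G|\ge 16$. Let $M$ be a maximal subgroup. By (i) it has exponent $4$, and since its elements of order $4$ are among the six of $G$ we have $c_{4}(M)\in\{1,2,3\}$. If $c_{4}(M)=3$ then $M\cong Q_{8}$ by induction; if $c_{4}(M)\le 2$ then $|M|-|C(M)|=c_{4}(M)\in\{1,2\}$, so by T\v{a}rn\v{a}uceanu's classification \cite{Tar1,Tar2} the $2$-group $M$ is one of $C_{4},D_{8},C_{4}\times C_{2},D_{8}\times C_{2}$. In every case $|M|\le 16$, so $|G|\le 32$. To rule out $|G|\in\{16,32\}$ I would use a counting identity: since $\exp(G)=4$, the quotient $G/\mho^{1}(G)$ (where $\mho^{1}(G)=\langle g^{2}:g\in G\rangle$) has exponent $\le 2$, so $\Phi(G)=\mho^{1}(G)$ is elementary abelian and contains none of the six elements of order $4$; thus, with $d=\dim_{\mathbb{F}_{2}}G/\Phi(G)$, each of those elements lies in exactly $2^{d-1}-1$ of the $2^{d}-1$ maximal subgroups, and double counting against $\sum_{M}2\,c_{4}(M)$ pins down the isomorphism types of the maximal subgroups. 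For $|G|=32$ this makes every maximal subgroup $\cong D_{8}\times C_{2}$ and yields the impossible equation $4(2^{d}-1)=6(2^{d-1}-1)$. For $|G|=16$ it makes either all three maximal subgroups $\cong D_{8}$ — impossible, because $C_{G}(\Phi(G))$ has index $\le 2$ in $G$, so some maximal subgroup centralises $\Phi(G)\cong C_{2}^{2}$ and would have centre of order $\ge 4$ — or else forces $G'=\Phi(G)=Z(G)\cong C_{2}$ with $|G|=16$, i.e. an extraspecial group of order $16$, which does not exist. Hence $|G|=8$, and $Q_{8}$ is the unique group of order $8$ with exponent $4$ and $c_{4}=3$.

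The hard part will be this final step: reducing the bound $|G|\le 32$ to $|G|=8$ forces one to combine the double-counting with the already tight constraints on $Z(G)$, $\Phi(G)=\mho^{1}(G)$ and the possible maximal subgroups, and the small-order bookkeeping for orders $16$ and $32$ must be carried out carefully so that no configuration slips through.
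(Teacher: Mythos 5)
Your opening reduction ($\exp(G)=4$, $c_4=3$) coincides with the paper's, but from there the two arguments diverge completely. The paper picks a normal cyclic subgroup $X$ of order $4$, shows $C=C_G(X)$ has elementary abelian quotient $C/X$, hence cyclic Frattini subgroup, and then invokes Berkovich's structure theorems (cyclic Frattini subgroup, extraspecial factorization $C=EZ(C)$) to force $C$ abelian and finally $C\cong C_4$, $G\cong Q_8$. You instead run an induction on $|G|$, feed the maximal subgroups into T\v{a}rn\v{a}uceanu's classifications of the difference-$1$ and difference-$2$ groups to get $|G|\le 32$, and kill orders $16$ and $32$ by double counting order-$4$ elements over maximal subgroups. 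Your route is more self-contained (no appeal to the classification of $2$-groups with cyclic Frattini subgroup) and your steps (i) and (ii) are correct as stated (modulo choosing $t$ to be an involution in (i), which is possible since $M t$ cannot consist entirely of order-$4$ elements); the order-$16$ endgame in particular is sound: when $d=3$ the relation $\mho^1(G)=\langle a_1^2,a_2^2,a_3^2\rangle\cong C_2$ forces $a_1^2=a_2^2=a_3^2$, so your step (ii) gives $Z(G)=\Phi(G)=G'\cong C_2$ and $G$ would be extraspecial of order $16$, which indeed does not exist.

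The one genuine soft spot is the assertion that $\Phi(G)=\mho^1(G)$ ``is elementary abelian and contains none of the six elements of order $4$.'' This does not follow from $\exp(G)=4$ alone: $\mho^1(G)$ is generated by the three involutions $a_1^2,a_2^2,a_3^2$, and a group generated by involutions in a group of exponent $4$ can contain elements of order $4$ (two of those involutions could generate a $D_8$). The claim is automatic when $|\Phi(G)|\le 4$, which covers $|G|=16$, but for $|G|=32$ with $d=2$ one has $|\Phi(G)|=8$ and the claim needs an argument. Since your equation $4(2^d-1)=6(2^{d-1}-1)$ presupposes that every order-$4$ element lies in exactly $2^{d-1}-1$ maximal subgroups, you must either prove the claim or redo the count allowing $n_1$ of the six order-$4$ elements to lie in $\Phi(G)$ (hence in all $2^d-1$ maximal subgroups); the resulting equations $4(2^d-1)=(6-n_1)(2^{d-1}-1)+n_1(2^d-1)$ are in fact contradictory for every even $n_1\in\{0,2,4,6\}$, so the argument survives, but as written this case is not checked. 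With that repair (and a sentence justifying ``forcing $|M|=8$'' in step (i) via $|C_M(t)|=|M|-6$ being a power of $2$), your proof is complete.
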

\begin{proof}
	Since $\phi(8)=4$, from (\ref{eq:1}), $exp(G)\le 4$. If $exp(G)=2$, then $c(G)=|G|$, a contradiction. Hence $exp(G)=4$, and $c_4=3$. We find a normal cyclic subgroup $X=\cy{x}$ of order $4$. Let $C=C_G(X)$. Then $|C|=2^a$ or $2^{a-1}$.
	
	We claim that $C/X$ is an elementary $2$-group. Otherwise, there exists an elemment $g\in C$ such that $gX$ is an  element of order $4$ in $C/X$. Hence $|g|=4$. Let $D=\cy{g,x}$. Then $D$ is abelian. Since $|gX|=4$, we get that $\cy{g}\cap X=1$, and $D\cong C_4 \times C_4$. But $c_4(C_4 \times C_4)>3$, a contradicton.
	
	Hence the Frattini subgroup $\Phi(C) \le X$, and $C$ is a $2$-group with cyclic Frattini subgroup. Suppose that $C$ is non-abelian. By \cite[Theorem 4.4]{Ber}, if $|\Phi(C)|>2$, there exists an element of order $2|\Phi(C)\ge 8$, a contradiction. Hence $\Phi(C)=C'\cong C_2$. 	By \cite[Lemma 4.2]{Ber}, $C=EZ(C)$ and $|E \cap Z(C)|=2$, where $E$ is an extra-special $2$-groups. By \cite[Theorem 3.13.8]{Hup}, $E=A_1*\cdots *A_m$, the central product of $A_i$, where $A_i\cong D_8$ or $Q_8$. Note that $c_4(Q_8)=3$ and $c_4(D_8)=1$ and $c_4(D_8*D_8)>3$. We get that $E=D_8$ for $X \not \le E$. Let $y \in E$ with $|y|=4$. Since $c_4=3$, we see that $Z(C) \cong C_4$ or $C_4 \times C_2$. If $Z(C)\cong C_4 \times C_2$, then $c_4(\cy{y, Z(C)})>3$, a contradiction. Hence $Z(C)=X\cong C_4$. But now $c_4(C)>3$, a contradiction. So we get that $C$ is abelian. 
	
	Since $c_4=3$, we get that $C \cong C_4$ or $C_4 \times C_2$. 
	Suppose that $C\cong C_4 \times C_2$. Since $c_4(C)=2$ and $c_4=3$, there exists $u \in G-C$ such that $|u|=2$. Let $C=\cy{x} \times \cy{w}$, where $|w|=2$. Thus $x^u=x^{\pm 1}$, and $w^u=w$ or $wx^2$, and we get $4$ groups. But none of the four groups satisfying $c_4=3$. Hence $C\cong C_4$, then $G \cong Q_8$.
\end{proof}

Combing Lemma \ref{3p}, \ref{2p} and \ref{1p}, we get our main result.	
	\begin{theorem}
		If $|C(G)|=|G|-3$, then $G \cong D_{10}$ or $Q_8$.
	\end{theorem}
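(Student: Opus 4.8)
The plan is to deduce the theorem directly from the three lemmas by a case split on the number of distinct prime divisors of $|G|$. Write $|G| = p_1^{a_1} \cdots p_r^{a_r}$ with $p_1 < \cdots < p_r$, and suppose $|C(G)| = |G| - 3$, i.e. $|G| - c(G) = 3$.

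\textbf{Case $r \ge 3$.} By Lemma \ref{3p}, $|G| - c(G) > p_r \ge 5 > 3$, so this case cannot occur.

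\textbf{Case $r = 2$, say $|G| = p^a q^b$ with $p < q$.} By Lemma \ref{2p}, either $|G| - c(G) > q$ or $G$ is one of $D_{2q}$, $C_6$, $D_{12}$, $S_3$, with the stated values of $|G| - c(G)$. If $|G| - c(G) > q$ then, since $q \ge 3$, we would need $q < 3$, impossible; so $G$ is on the short list. Checking the listed values: $|D_{2q}| - |C(D_{2q})| = q - 2$, which equals $3$ exactly when $q = 5$, giving $G \cong D_{10}$; the values for $C_6$, $D_{12}$, $S_3$ are $2$, $2$, $1$ respectively, none equal to $3$. So the only group here is $D_{10}$.

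\textbf{Case $r = 1$.} Then $|G| = p^a$ is a prime power. From \eqref{eq:1}, $|G| - c(G) = \sum_{k} c_k(\phi(k) - 1)$, and every nontrivial $k \in \pi_e(G)$ is a power of $p$ with $\phi(k) - 1 = p^{i-1}(p-1) - 1$ for $k = p^i$. If $p \ge 3$ then already a single cyclic subgroup of order $p$ contributes $p - 2 \ge 1$ and a subgroup of order $p^2$ (if any) contributes $p(p-1) - 1 \ge 5$; a short check shows $|G| - c(G) = 3$ forces $p = 2$. Hence $|G| = 2^a$, and Lemma \ref{1p} gives $G \cong Q_8$.

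Combining the three cases, $G \cong D_{10}$ or $G \cong Q_8$, and conversely one checks directly that both of these satisfy $|C(G)| = |G| - 3$ (for $Q_8$: $c_1 = c_2 = 1$, $c_4 = 3$, so $|C(Q_8)| = 5 = 8 - 3$; for $D_{10}$: $c_1 = 1$, $c_2 = 5$, $c_5 = 1$, so $|C(D_{10})| = 7 = 10 - 3$). The only real content is in the three lemmas; the theorem itself is just bookkeeping, and the one place to be slightly careful is the $r = 1$ prime-power case, where Lemma \ref{1p} is stated only for $p = 2$, so I must first rule out odd $p$ by hand using \eqref{eq:1} before invoking it.
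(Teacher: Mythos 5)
Your overall strategy is the same as the paper's (its entire proof of the theorem is "combine Lemmas \ref{3p}, \ref{2p} and \ref{1p}"), and your handling of the cases $r\ge 3$, $r=2$, and $|G|=2^a$ is correct and matches the paper. You also correctly spotted the one point the paper glosses over: none of the three lemmas covers $|G|=p^a$ with $p$ odd. Unfortunately, your patch for that case is wrong. The "short check" does \emph{not} force $p=2$: take $G\cong C_5$. Then $\pi_e(G)=\{1,5\}$, $c_5=1$, and (\ref{eq:1}) gives $|G|-|C(G)|=c_5(\phi(5)-1)=3$; directly, $C(C_5)$ consists of the trivial subgroup and $C_5$ itself, so $|C(C_5)|=2=5-3$. (The trivial subgroup must be counted here, since otherwise neither the identity (\ref{eq:1}) nor the introduction's characterization of elementary abelian $2$-groups by $|C(G)|=|G|$ would hold.) So $C_5$ survives your case analysis and is in fact a counterexample to the theorem as stated: it is isomorphic to neither $D_{10}$ nor $Q_8$.

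Concretely, for odd $p$ the condition $\sum_i c_{p^i}\bigl(\phi(p^i)-1\bigr)=3$ rules out $p=3$ (it would force $\exp(G)=3$ and $c_3=3$, whence $|G|=1+2c_3=7$, not a power of $3$) and all $p\ge 7$ (since then $p-2\ge 5>3$), but for $p=5$ it is satisfied exactly by $c_5=1$ and $\exp(G)=5$, i.e.\ by $G\cong C_5$. The correct conclusion of the theorem should therefore be $G\cong C_5$, $D_{10}$ or $Q_8$. This gap is not yours alone --- the paper's one-line proof silently omits the odd-prime-power case and its statement omits $C_5$ --- but your write-up asserts the false step ("forces $p=2$") explicitly, so that is the step that fails.
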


\end{document}